\documentclass{article}  

% ---------------   

\usepackage{setspace}
\usepackage{float}
\usepackage{amsmath,amsfonts,amssymb}
\usepackage{amsthm}
\usepackage{graphicx}
\usepackage{epstopdf}
\usepackage{caption}

\usepackage{a4wide}
\usepackage{color}
\usepackage{url}

% -------------------------------------------

\theoremstyle{plain}
\newtheorem{theorem}{Theorem}

\newtheorem{corollary}{Corollary}
\theoremstyle{definition}
\newtheorem{definition}{Definition}

\newtheorem{example}{Example}
\newtheorem{remark}{Remark}

% -------------------------------------------

\newcommand\blfootnote[1]{%
	\begingroup
	\renewcommand\thefootnote{}\footnote{#1}%
	\addtocounter{footnote}{-1}%
	\endgroup
}

% -------------------------------------------

\begin{document}	

\title{Gradient Mittag-Leffler and strong stabilizability\\ 
of time fractional diffusion processes\blfootnote{This is a preprint 
of a paper whose final and definite form is published in 
'Journal of Mathematical Sciences' ({\tt https://link.springer.com/journal/10958})}}

\author{Hanaa Zitane$^{1,2}$\\
\texttt{h.zitane@uae.ac.ma} 
\and Delfim F. M. Torres$^{2,}$\thanks{Corresponding author.}\\
\texttt{delfim@ua.pt}}

\date{$^{1}$Department of Mathematics, Faculty of Sciences,\\
University of Abdelmalek Essaadi, B.P.~2121, Tetouan, Morocco\\[0.3cm] 
$^{2}$\text{Center for Research and Development in Mathematics and Applications (CIDMA),} 
Department of Mathematics, University of Aveiro, 3810-193 Aveiro, Portugal}

\maketitle	

% --------------------------------------------------------

\begin{abstract}
This paper deals with the gradient stability and the gradient stabilizability 
of Caputo time fractional diffusion linear systems. First, we give sufficient 
conditions that allow the gradient Mittag-Leffler and strong stability, where 
we use a direct method based essentially on the spectral properties of the system dynamic. 
Moreover, we consider a class of linear and distributed feedback controls that 
Mittag-Leffler and strongly stabilize the state gradient. The proposed results 
lead to an algorithm that allows us to gradient stabilize the state of the 
fractional systems under consideration. Finally, we illustrate the effectiveness 
of the developed algorithm by a numerical example and simulations. 
	
\medskip
	
\noindent \textbf{Keywords:} decomposition approach, 
fractional diffusion systems, gradient stabilization, 
gradient stability, partial differential equations,
distributed controls.
	
\end{abstract}
\medskip

\noindent \textbf{2020 Mathematics Subject Classification}:  
26A33, 35R11, 93C05, 93D15, 93D20.	

% --------------------------------------------------------

\section{Introduction}

Fractional systems with Caputo derivative have gained significant 
attention in various fields, due to their physical meaning and 
wide applications, such as in epidemics \cite{Ye,ZZT}, porous 
and fractured media \cite{List}, population dynamics \cite{Berestycki}, etc.
The class of fractional diffusion systems (FDSs) has been especially widely
investigated in chemistry, physics (particle diffusion), biology, economics 
(diffusion of price values) and sociology (diffusion of people).  Indeed, 
based on the continuous-time random walks theory, it is worth noting that 
FDSs can be used to efficiently describe anomalous diffusion processes 
with a highly heterogeneous aquifer, and offer better performance 
than using conventional diffusion systems \cite{Asol1}. For example, 
FDSs may modelize the relaxation phenomena in complex viscoelastic materials 
\cite{Asol13}, the movement of plasma under high temperature and high pressure, 
and the power law decay of prime number distributions \cite{ChenLiang}.

Stability is among the most extensively studied concepts in control theory 
since an unstable dynamical system is of no interest as it can explode 
anywhere and at any instant. It signifies that the system remains in a 
constant state unless affected by an external action and returns 
to its equilibrium state when the external action is detached. 
Also one of the challenging problems related to the stability 
analysis is stabilizability, which means that if a dynamical system 
is not stable by itself, then the question arises whether 
it may be stabilized by selecting suitable controller inputs.

In recent years, the analysis of the stability, stabilizability, and 
related problems of FDSs have been tackled in many works: see, e.g., 
\cite{Gao,Chen2,Huang,ZitP21,ZitP22}. For instance, in \cite{ZitP21,ZitP22} 
the authors consider different degrees of the state stability 
and stabilization: the strong and the exponential ones of fractional 
distributed systems involving both Caputo and Riemann--Liouville time 
derivatives of order $q \in (0,1]$. Furthermore, the decomposition approach 
is applied to conceive the state stabilizing feedback law. Moreover, 
in \cite{Chen2}, the authors investigate the regional stabilization concept, 
where they present some results about the regional Mittag-Leffler 
and asymptotic stabilization of FDSs on a sub-region of the geometrical domain, 
while in \cite{Huang}, the boundary controller design and stability analysis 
for a time-space fractional diffusion equation are presented. Among other 
interesting concepts concerning the control theory of fractional diffusion systems 
one can find the gradient controllability \cite{Gea} and gradient observability \cite{Gea1} 
in a sub-region of the geometrical domain of the Caputo and Riemann--Liouville time FDSs. 

After analyzing the existing literature, it is evident that the gradient stability 
and stabilizability of FDSs are still untreated subjects in the literature 
and this fact is the motivation of the present work. Thus, the purpose 
of this paper is to derive some sufficient conditions that allow 
the gradient stability of Caputo fractional time systems of order 
$q \in (0, 1]$ and also to design a control law that ensures 
the gradient stabilization of a class of fractional diffusion systems, 
defined on a bounded and connected subset $\Omega\subset \mathbb{R}^{n}$, 
$n=1,2,3,\ldots$ (with smooth boundary $\partial \Omega$) and described by
\begin{equation}
\label{system0}
\left\{
\begin{array}{ll}
^{C}D_{t}^{q}y(x,t)=Ay(x,t)+Lv(x,t), & t\in]0,+\infty[,\\
y(\eta, t)=0,  & \eta \in\partial \Omega, t \in]0,+\infty[,\\
y(x,0)=y_{0}(x),  & y_{0}\in Y,
\end{array}
\right.
\end{equation}
where the state space is $Y=H^{1}(\Omega)$, 
the operator $A: D(A) \subset Y \longrightarrow Y$ is linear 
and generates a $C_{0}$-semi-group $(T(t))_{t \geq 0}$ on $Y$ \cite{Nagel}, 
$-A$ is a uniformly elliptic operator and $L$ is a linear bounded operator 
from $V_{ad}$ (the space of controls) 
into $Y$ with $v(t)$ a scalar input. Let $^{C}D_{t}^{q}$ be the left-sided 
Caputo fractional derivative \cite{der2} of order $q \in (0, 1]$   
with respect to time $t$, defined by
$$
^{C}D_{t}^{q}y(x,t)=\dfrac{1}{\Gamma(1-q)}
\int_{0}^{t}(t-s)^{-q} \dfrac{d}{ds}y(x,s)\, \mathrm{d}s.
$$
Moreover, we consider the gradient operator $\nabla$, which is defined by
$$
\begin{array}{llcll}
\nabla & : & Y & \mapsto & (L^{2}(\Omega))^{n} \\
& & y & \mapsto & \nabla y := \left(     
\dfrac{\partial y}{\partial x_{1}}, \dfrac{\partial y}{\partial x_{2}}, 
\ldots , \dfrac{\partial y}{\partial x_{n}}  \right).
\end{array}$$
In particular, when $q=1$, the gradient stabilizability of the system~\eqref{system0} 
reduces to the gradient stabilizability of a classical integer order diffusion system, 
which is investigated in \cite{Zerrik3,zit1}. It is worthy to mention that there 
are various applications of the gradient modeling. For instance, it is the exchange 
of the energy  problem between a casting plasma on a plane target, 
which is perpendicular to the direction of the flow sub-diffusion process 
from measurements carried out by internal thermocouples. Another example 
is the concentration regulation of a substrate at the upper bottom of a biological 
reactor sub-diffusion process, which is observed between two levels \cite{Gea1}. 
For a richer background on gradient models, we refer the reader to \cite{Cort,Kessell}. 
Indeed, gradient stabilization plays an important role in enhancing 
the reliability and efficiency of different engineering and physical systems. 
In particular, gradient stabilization  methods can be applied to various real-world problems. 
For instance, in aerospace engineering, when simulating airflow around aircraft, 
the presence of shock waves can introduce instabilities. Gradient stabilization helps 
to maintain numerical stability, enabling engineers to accurately predict lift and drag forces, 
which are vital for optimizing aircraft performance and fuel efficiency. Moreover, in structural 
engineering, maintaining the stability of structures under varying loads is crucial. 
Therefore, gradient stabilization methods can be applied to optimize the design 
of beams, bridges, and buildings \cite{Kirsch}.

The remainder contents of this paper are structured as follows. 
In Section~\ref{sec:2}, we examine the state gradient Mittag-Leffler 
and strong stability. In Section~\ref{sec:3}, under sufficient conditions, 
we present a class of distributed feedback controls that ensure the state 
gradient Mittag-Leffler and strong stabilization of fractional 
time diffusion systems using two approaches: the first one concerns 
the decomposition method and the second one is based on the completely 
monotonic property of Mittag-Leffler functions and also on the spectrum 
properties of the operator $A$. In Section~\ref{sec:4}, we illustrate 
our theoretical results by a numerical example and simulations. 
In the last section, Section~\ref{sec:5}, we give a conclusion 
and some possible directions of future research. 

% --------------------------------------------------------

\section{Gradient stability}
\label{sec:2}

Throughout this paper, the spaces $Y$ and $L^{2}(\Omega)^{n}$ 
are endowed with the usual inner products $\langle \cdot , \cdot \rangle$ 
and $<\cdot,\cdot>_{n}$ and the associated norms $\Arrowvert \cdot \Arrowvert$ 
and  $\Arrowvert \cdot \Arrowvert_{L^{2}(\Omega)^{n}}$, respectively. 
By $\nabla^{*}$ we denote the adjoint operator of the gradient operator $\nabla$. 
Moreover, to avoid confusion, we shall sometimes denote $y(\cdot,t):=y(t)$. 

In this section, we explore some sufficient conditions for the gradient 
Mittag-Leffler and strong stability of the fractional linear 
diffusion system~\eqref{system0} with the control operator $L=0$.

We first give the following gradient stability definitions. 

\begin{definition}
Let $y_{0} \in Y$. The system~\eqref{system0} is said to be 
\begin{itemize}
\item Gradient Mittag-Leffler stable, if there exist 
$C>0$, $\xi >0$, $b>0$ such that
$$
\Arrowvert \nabla y(t) \Arrowvert_{L^{2}(\Omega)}  
\leq C \{ E_{q}(- \xi t^{q})\}^{b}\Arrowvert y_{0} \Arrowvert.
$$
\item Gradient strongly stable, if the corresponding solution 
$y(\cdot)$ of \eqref{system0} satisfies
$$
\underset{t \longrightarrow +\infty }{\lim}
~\Arrowvert  \nabla y(t) \Arrowvert_{L^{2}(\Omega)}=0.
$$
\end{itemize} 
\end{definition}

Now, let us consider the sets
$$
\omega_{1}(A)=\left\{ \lambda~\in~\omega(A): \lambda \geq 0, 
N(A-\lambda I)\nsubseteq N(\nabla^{*} \nabla) \right\},
$$	
$$
\omega_{2}(A)=\left\{\lambda~\in~\omega(A): \lambda<0, 
N(A-\lambda I)\nsubseteq N(\nabla^{*} \nabla)\right\},
$$
where $\omega(A)$ is the spectrum of $A$ and $N(*)$ is the kernel of operator $*$.
\begin{theorem}
\label{theorem1}
Let $(\lambda_{n})_{n\geq1}$ and $(\phi_{n})_{n\geq1}$ be the eigenvalues 
and the corresponding eigenfunctions of the operator $A$ on $Y$. If 
\begin{description}
\item[i)] $\omega_{1}(A)=\emptyset$;

\item[ii)] for all $ \lambda_{n} \in \omega_{2}(A)$, $n=1,2,\ldots$, 
there exists $\xi>0$ such that $\lambda_{n}\leq -\xi$;
\end{description}
then, the system~\eqref{system0} is gradient Mittag-Leffler 
(respectively gradient strongly) stable in $\Omega$.
\end{theorem}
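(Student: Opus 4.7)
\medskip

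\noindent\textbf{Proof proposal.}
The plan is to exploit the spectral representation of the free solution of \eqref{system0} (with $L=0$). Since $-A$ is uniformly elliptic on the bounded domain $\Omega$, its eigenfunctions $(\phi_{n})_{n\geq 1}$ form a complete basis of $Y=H^{1}(\Omega)$, and the Caputo Cauchy problem admits the Mittag-Leffler expansion
$$y(t) = \sum_{n\geq 1} E_{q}(\lambda_{n}t^{q})\,\langle y_{0},\phi_{n}\rangle\,\phi_{n},$$
so that, applying $\nabla$ termwise,
$$\nabla y(t) = \sum_{n\geq 1} E_{q}(\lambda_{n}t^{q})\,\langle y_{0},\phi_{n}\rangle\,\nabla\phi_{n}.$$
A preparatory identity is $N(\nabla^{*}\nabla) = N(\nabla)$, which follows from $\Arrowvert\nabla u\Arrowvert_{L^{2}(\Omega)^{n}}^{2} = \langle \nabla^{*}\nabla u, u\rangle$.

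I would then partition the spectrum into three regimes. For $\lambda_{n}>0$ and for $\lambda_{n}=0$, hypothesis (i) (the emptiness of $\omega_{1}(A)$) forces $N(A-\lambda_{n}I)\subseteq N(\nabla^{*}\nabla)=N(\nabla)$, so $\nabla\phi_{n}=0$ and those modes are absent from $\nabla y(t)$. For $\lambda_{n}<0$, either $\phi_{n}\in N(\nabla)$ and the term again vanishes, or $\lambda_{n}\in\omega_{2}(A)$, in which case (ii) supplies the uniform upper bound $\lambda_{n}\leq-\xi$. Only this last class of indices contributes to the gradient.

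The analytical engine is then the complete monotonicity of $s\mapsto E_{q}(-s)$ on $[0,+\infty)$ (Pollard's theorem), which in particular yields the monotonicity estimate $|E_{q}(\lambda_{n}t^{q})|\leq E_{q}(-\xi t^{q})$ for every contributing index. Pulling this common factor out of the series produces
$$\Arrowvert\nabla y(t)\Arrowvert_{L^{2}(\Omega)^{n}} \leq E_{q}(-\xi t^{q})\left(\sum_{\lambda_{n}\leq-\xi}|\langle y_{0},\phi_{n}\rangle|^{2}\Arrowvert\nabla\phi_{n}\Arrowvert^{2}\right)^{1/2}.$$

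The step I expect to be the main obstacle is bounding the remaining series by $C\Arrowvert y_{0}\Arrowvert^{2}$, since the factors $\Arrowvert\nabla\phi_{n}\Arrowvert^{2}$ grow with $n$. I would close it through the elliptic energy identity $\Arrowvert\nabla\phi_{n}\Arrowvert^{2}\leq c\,|\lambda_{n}|$ (coercivity of $-A$), combined with the $H^{1}$-regularity of $y_{0}\in Y$, so that a Parseval-type computation recognises the residual sum as controlled by $\Arrowvert y_{0}\Arrowvert_{H^{1}}^{2}$. This delivers the Mittag-Leffler estimate with the constants $C>0$ and $\xi>0$ of the statement and exponent $b=1$. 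The gradient strong stability then follows at once, since $E_{q}(-\xi t^{q})\to 0$ as $t\to+\infty$.
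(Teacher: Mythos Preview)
Your proposal is correct and follows essentially the same route as the paper: spectral expansion of the mild solution, termwise application of $\nabla$, elimination of the nonnegative-eigenvalue modes via hypothesis~(i), and the uniform Mittag-Leffler bound $E_{q}(\lambda_{n}t^{q})\leq E_{q}(-\xi t^{q})$ from monotonicity. Your treatment is in fact more scrupulous than the paper's on two points---you make explicit the identity $N(\nabla^{*}\nabla)=N(\nabla)$ that justifies dropping the $\omega_{1}$-modes, and you confront the growth of $\Arrowvert\nabla\phi_{n}\Arrowvert^{2}$ in the residual sum, which the paper absorbs into an unspecified constant~$C$ without comment.
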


\begin{proof} 
We present the proof of the gradient Mittag-Leffler stability. 
The proof of the gradient strong stability 
is similar and is left to the reader.
The system~\eqref{system0} admits a unique weak solution 
$y \in C(0, T, Y)$ \cite{solmittag} defined by	
\begin{equation}
\label{key1}
y(t)= \sum_{n=1}^{+\infty} E_{q} (\lambda_{n} t^{q}) 
\langle y_{0}, \phi_{n}\rangle\phi_{n},~\forall y_{0}\in Y,
\end{equation}
where $E_{q} (\lambda_{n} t^{q})$ is the  Mittag-Leffler 
function in one parameter \cite{mettalefflerdef}, given by
$$
E_{q}(\lambda_{n} t^{q})=\sum_{k=0}^{+\infty}
\frac{(\lambda_{n} t^{q})^{k}}{\Gamma(q k+ 1)}.
$$	
One has
\begin{equation}
\label{eqgrad}
\nabla y(t)= \sum_{n=1}^{+\infty} 
E_{q}(\lambda_{n} t^{q}) \langle y_{0}, 
\phi_{n}\rangle \nabla\phi_{n},
\end{equation}	 
which yields	
$$
\Arrowvert \nabla y(t) \Arrowvert_{L^{2}(\Omega)}^{2} 
= \displaystyle\sum \limits_{{n=1}}^{+\infty}(E_{q}(\lambda_{n} 
t^{q}))^{2} \langle y_{0},\phi_{n}\rangle^{2}
\Arrowvert \nabla \phi_{n} \Arrowvert_{L^{2}(\Omega)}^{2}.
$$
By virtue of assumption~(ii) and by using the fact 
that $\dfrac{d}{dt}  E_{q}(-t^{q})\leq 0$, 
for $q \in (0,1)$ and $ t>0$ \cite{MAINARDI2}, it follows that 
$$ (-\lambda_{n})^{\frac{1}{q}}> \xi^{\frac{1}{q}}>0$$
and
$$E_{q}(\lambda_{n}t^{q})=E_{q}(-((-\lambda_{n})^{\frac{1}{q}}t)^{q})
\leq E_{q}(-\xi t^{q}), \lambda_{n} \in \omega_{2}(A).
$$
Furthermore, from the previous relation 
and conditions (i) and (ii), one gets
$$
\Arrowvert \nabla y(t) \Arrowvert_{L^{2}(\Omega)} ^{2} 
\leq (E_{q}(- \xi t^{q}))^{2} \displaystyle\sum \limits_{{n=1}}^{+\infty} 
\langle y_{0},\phi_{n}\rangle^{2}\Arrowvert 
\nabla \phi_{n} \Arrowvert_{L^{2}(\Omega)} ^{2},
$$
which implies that there exists $C>0$ such that
$$
\Arrowvert \nabla y(t) \Arrowvert_{L^{2}(\Omega)}  
\leq C E_{q}(-\xi t^{q})\Arrowvert y_{0} \Arrowvert.
$$
The proof is complete.
\end{proof}

To highlight the previous result, we consider the following example.

\begin{example}
Let us consider $\Omega=]0, 1[\times ]0, 1[ \subset \mathbb{R}^{2}$ 
and the following sub-diffusion system:
\begin{equation}
\label{system55cp}
\left\{
\begin{array}{lll}
^{C}D_{t}^{0.6}y(x_{1}, x_{2}, t)=(\dfrac{\partial^2  y}{\partial x_{1}^2}
+\dfrac{\partial^2 y}{\partial x_{2}^2})(x_{1}, x_{2}, t),
&  (x_{1}, x_{2}) \in\Omega, t \in]0,+\infty[,\\
y(\eta_{1}, \eta_{2}, t)=0,  & (\eta_{1}, \eta_{2}) \in\partial \Omega, t \in]0,+\infty[,\\
y(x_{1}, x_{2},0)=z_{0}(x_{1}, x_{2}), &(x_{1}, x_{2}) \in \Omega,
\end{array}
\right.
\end{equation}
where the dynamic $A=(\dfrac{\partial^2}{\partial x_{1}^2}
+\dfrac{\partial^2}{\partial x_{2}^2})$, 
the eigenvalues are
\begin{equation}
\label{eqsol}
\lambda_{nm}= -(n+m)^{2}\pi^{2}, ~n,m\geq1
\end{equation}
with the corresponding  eigenfunctions 
$$
\phi_{nm}(x_{1}, x_{2})=\sqrt{2}\sin(n\pi x_{1})\sin(m\pi x_{2}), ~n,m\geq1.
$$
The state of system~\eqref{system55cp} is defined by
$$
z(x_{1}, x_{2},t)=\sum_{n=1}^{+\infty} 
\sum_{m=1}^{+\infty}E_{0.6} (\lambda_{nm} t^{0.6})
\langle z_{0}, \phi_{nm}\rangle \phi_{nm}(x_{1}, x_{2}),
$$
and the state gradient of system~\eqref{system55cp} is given by 
\begin{equation}
\label{ff}
\nabla z(x_{1}, x_{2},t)=\sum_{n=1}^{+\infty} 
\sum_{m=1}^{+\infty}E_{0.6} (\lambda_{nm} t^{0.6})
\langle z_{0}, \phi_{nm}\rangle\nabla \phi_{nm}(x_{1}, x_{2}).
\end{equation}
By virtue of \eqref{eqsol}, it follows that
$$
\lambda_{nm}\leq -4\pi^{2},~ \text{ for all } n,~m \geq 1,
$$
which means that $\omega_{1}(A)=\varnothing$.
Hence, from Theorem~\ref{theorem1}, we conclude with
the Mittag-Leffler stability of the state gradient~\eqref{ff} 
of system~\eqref{system55cp}.	
\end{example}

% --------------------------------------------------------

\section{Gradient stabilizability characterizations}
\label{sec:3}

Consider the system~\eqref{system0} with the same conditions.

\begin{definition}
The system~\eqref{system0} is said to be gradient Mittag-Leffler 
(respectively gradient strongly) stabilizable if, for any $y_0 \in Y$,  
there exists a bounded operator $D \in \mathcal{L}(Y,V_{ad})$ 
such that the closed-loop system	
\begin{equation}
\label{system2}
\left\{
\begin{array}{ll}
^{C}D_{t}^{q}y(x,t)=(A+LD)y(x,t), & t\in]0,+\infty[,\\
y(\eta, t)=0,  & \eta \in\partial \Omega, t \in]0,+\infty[,\\
y(x,0)=y_{0}(x),  & y_{0}\in Y,
\end{array}
\right.
\end{equation} 	
is gradient Mittag-Leffler (respectively gradient strongly) stable.
\end{definition}

\begin{remark}
\begin{enumerate}
\item The state gradient stabilization is a special case 
of the output stabilization with the output operator being the gradient $\nabla$.

\item  The gradient Mittag-Leffler stabilization 
implies the gradient strong stabilization.

\item The gradient state stabilization is cheaper than the  
state stabilization. Indeed, if we consider the functional cost
$$
J( v)= \displaystyle \int_{0}^{+\infty}
\Arrowvert v(t) \Arrowvert^{2}\, \mathrm{d}t
$$
and the feedback spaces
$$
V^{1}_{ad}= \left\{ v \in L^{2}(0, +\infty; V_{ad})| v~ 
\text{ stabilizes~ the~ gradient~ state~ of}~\eqref{system0} \right\}, 
$$	 
and
$$
V^{2}_{ad}= \left\{ v \in L^{2}(0, +\infty; V_{ad})| 
v~ \text{ stabilizes~ the~  state~ of}~\eqref{system0} \right\},
$$
then one has that 
$$
V^{2}_{ad}\subset V^{1}_{ad}.
$$
Hence,
$$
\underset{u \in V^{1}_{ad} }{\min}J(v)
\leq \underset{v \in V^{2}_{ad} }{\min}J(v).
$$
\end{enumerate}
\end{remark}

Next, we shall characterize the gradient stabilizing control 
of system~\eqref{system0} using the decomposition method, 
which can be described as follows:

Consider a fixed $\beta> 0$ and suppose that the operator $A$ 
is self-adjoint with compact resolvent, which means that there 
are at most finitely many nonnegative eigenvalues of $A$ 
and each with finite dimensional eigenspace, i.e., 
there exists $l \in \mathbb{N}$ such that
\begin{equation}
\label{spectrumdecom}
\omega(A)= \omega_{u}(A)\cup \omega_{s}(A),
\end{equation}
where 
$$
\omega_{u}(A)
= \left\{ \lambda_{n} \in \omega(A), ~~~n=1, 2,\ldots,l \right\}
$$
and 
$$
\omega_{s}(A)=  \{\lambda_{n} \in \omega(A), ~~~ n=l+1, l+2 \ldots \}
$$
with $\lambda_{l} \geq 0$ and $\lambda_{l+1} \leq -\beta$. 
Since the sequence $(\phi_{n})_{n\geq1}$ forms a complete 
and orthonormal basis in $Y$, it follows that the state 
space $Y$ can be decomposed  according to
\begin{equation}
\label{eqa}
Y=Y_{u}\oplus Y_{s},
\end{equation}
where 
$$
Y_{u}=PY=span\{ \phi_{1}, \phi_{2}, \ldots, \phi_{l} \},~ 
Y_{s}=(I-P)Y=span\{ \phi_{l+1}, \phi_{l+2}, \ldots \},
$$ 
with $P \in L(Y)$ being the projection operator, defined by 
$$
P=\frac{1}{2\pi}\int_{\varGamma} (\lambda I-A)^{-1}
\, \mathrm{d}\lambda,
$$
with $\varGamma$ being a curve surrounding $\omega(A)$ \cite{decomp}.

As a consequence, system~\eqref{system0} may be decomposed 
into the following two sub-systems:
\begin{equation}
\label{system3}
\begin{cases}^{c}D_{t}^{q}y_{u}(x,t)=A_{u}y_{u}(x,t)+PLv(x,t), \\
y_{u}(\eta,t)=0\\
y_{0u}=Py_0,\\
y_{u}=Py,
\end{cases}
\end{equation}
and 
\begin{equation}
\label{system4}
\begin{cases}^{c}D_{t}^{q}y_{s}(t)=A_{s}y_{s}(x,t)+(I-P)Lv(x,t),\\
y_{s}(\eta,t)=0\\
y_{0s}=(I-P)y_0,\\
y_{s}=Py,
\end{cases}
\end{equation} 
where $A_{s}$ and $A_{u}$ are the restrictions of $A$ on $Y_{s}$ 
and $Y_{u}$, respectively, with $A_{u}$ bounded on $Y_{u}$, 
$\omega(A_{u})=\omega_{u}(A)$ and $\omega(A_{s})=\omega_{s}(A)$.

The following theorem shows that the gradient stabilization of system
\eqref{system0} is equivalent to the gradient stabilization 
of system~\eqref{system3}.

\begin{theorem}
\label{theorem3}	
Assume that the spectrum $\omega (A)$ of $A$ admits the decomposition assumption 
\eqref{spectrumdecom} for some $\beta> 0$. If there exists a bounded operator 
$D_{u}\in L(Y_{u},V_{ad})$ such that the control 	
\begin{equation}
\label{contDecom}
v(t)=D_{u} y_{u}(t)
\end{equation} 
gradient Mittag--Leffler stabilizes system~\eqref{system3},	
then the system~\eqref{system0} is gradient Mittag-Leffler stabilizable 
by the feedback control \eqref{contDecom}.
\end{theorem}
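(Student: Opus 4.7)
The plan is to substitute the feedback $v(t)=D_{u}y_{u}(t)=D_{u}Py(t)$ into system~\eqref{system0} and exploit the spectral decomposition $Y=Y_{u}\oplus Y_{s}$. Since $Y_{u}$ and $Y_{s}$ are $A$-invariant, projecting the closed-loop equation $^{C}D_{t}^{q}y=(A+LD_{u}P)y$ by $P$ and $I-P$ cleanly separates the dynamics: on $Y_{u}$ we recover precisely the stabilized system~\eqref{system3} with control~\eqref{contDecom}, which by hypothesis satisfies a gradient Mittag--Leffler bound $\Arrowvert\nabla y_{u}(t)\Arrowvert_{L^{2}(\Omega)^{n}}\le C_{u}\{E_{q}(-\xi_{u}t^{q})\}^{b_{u}}\Arrowvert y_{0u}\Arrowvert$; on $Y_{s}$ we obtain the forced fractional equation $^{C}D_{t}^{q}y_{s}=A_{s}y_{s}+(I-P)LD_{u}y_{u}$. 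Since $\nabla y=\nabla y_{u}+\nabla y_{s}$, it suffices to derive a Mittag--Leffler envelope for $\Arrowvert\nabla y_{s}(t)\Arrowvert_{L^{2}(\Omega)^{n}}$ and close the argument by the triangle inequality.

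To control $y_{s}$, I would write the mild spectral representation
\[
y_{s}(t)=\sum_{n\ge l+1}E_{q}(\lambda_{n}t^{q})\langle y_{0s},\phi_{n}\rangle\phi_{n}+\sum_{n\ge l+1}\int_{0}^{t}(t-s)^{q-1}E_{q,q}(\lambda_{n}(t-s)^{q})\langle (I-P)LD_{u}y_{u}(s),\phi_{n}\rangle\phi_{n}\,\mathrm{d}s,
\]
apply $\nabla$ termwise, and bound the two pieces separately. The homogeneous piece is handled directly by Theorem~\ref{theorem1} applied to $A_{s}$, whose spectrum lies in $(-\infty,-\beta]$, so hypotheses (i) and (ii) are satisfied and we obtain a bound of order $E_{q}(-\beta t^{q})\Arrowvert y_{0s}\Arrowvert$. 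Using the boundedness of $L$, $D_{u}$ and the hypothesised decay of $\Arrowvert y_{u}(s)\Arrowvert$, the forced piece reduces to estimating the scalar convolution
\[
I(t):=\int_{0}^{t}(t-s)^{q-1}E_{q,q}(-\beta(t-s)^{q})\bigl\{E_{q}(-\xi_{u}s^{q})\bigr\}^{b_{u}}\,\mathrm{d}s.
\]

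The main obstacle is proving that $I(t)$ itself admits a Mittag--Leffler envelope rather than only a slower power-type decay. I would split $[0,t]$ at $t/2$: on $[0,t/2]$ bound the $s$-factor by its value at $t/2$ and use the transform identity $\mathcal{L}\{\tau^{q-1}E_{q,q}(-\beta\tau^{q})\}(p)=1/(p^{q}+\beta)$ to control $\int_{0}^{t/2}(t-s)^{q-1}E_{q,q}(-\beta(t-s)^{q})\,\mathrm{d}s$ uniformly in $t$; on $[t/2,t]$ bound the kernel by its value at $t/2$ and integrate the Mittag--Leffler factor. Exploiting the complete monotonicity of $E_{q}(-t^{q})$ \cite{MAINARDI2} to dominate the two contributions by a single $E_{q}$-type decay yields $\Arrowvert\nabla y_{s}(t)\Arrowvert_{L^{2}(\Omega)^{n}}\le C\{E_{q}(-\mu t^{q})\}^{b}\Arrowvert y_{0}\Arrowvert$ for suitable constants $C,\mu,b>0$, and combining this with the assumed bound on $\nabla y_{u}$ closes the argument. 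This convolution estimate is where I expect essentially all of the technical effort to concentrate; the decomposition and spectral representation steps are routine once the spectral assumption \eqref{spectrumdecom} is in force.
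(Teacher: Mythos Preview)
Your global architecture coincides with the paper's: project onto $Y_{u}\oplus Y_{s}$, invoke the hypothesis on $y_{u}$, write the spectral mild solution for $y_{s}$, bound the homogeneous part using $\lambda_{n}\le-\beta$ for $n\ge l+1$, bound the forced part via a convolution estimate, and finish with the triangle inequality $\Arrowvert\nabla y\Arrowvert\le\Arrowvert\nabla y_{u}\Arrowvert+\Arrowvert\nabla y_{s}\Arrowvert$. The only substantive divergence is in how the convolution $I(t)$ is treated, and there your description contains a concrete error.

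On $[0,t/2]$ the factor $\{E_{q}(-\xi_{u}s^{q})\}^{b_{u}}$ is \emph{decreasing} in $s$, so it is bounded \emph{below}, not above, by its value at $t/2$; and on $[t/2,t]$ the kernel $(t-s)^{q-1}E_{q,q}(-\beta(t-s)^{q})$ blows up as $s\to t^{-}$ (since $q<1$), so it cannot be dominated by its value at $s=t/2$. The roles of the two half-intervals must be swapped: on $[0,t/2]$ one has $t-s\ge t/2$ and bounds the kernel; on $[t/2,t]$ one has $s\ge t/2$ and bounds the $s$-factor. With that correction the argument can be completed, using Mainardi's two-sided estimates to recast the resulting power decay as a Mittag--Leffler envelope.

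The paper bypasses the splitting altogether by a cleaner device. It first weakens the Mittag--Leffler bound on $y_{u}$ to the pure power bound $\Arrowvert y_{u}(s)\Arrowvert\le h_{\beta}s^{-q}\Arrowvert y_{0u}\Arrowvert$ (via $E_{q}(-\beta t^{q})\le \Gamma(1+q)\beta^{-1}t^{-q}$), then expands $E_{q,q}$ as a series and evaluates the resulting Beta integrals
\[
\int_{0}^{t}(t-s)^{qk+q-1}s^{-q}\,\mathrm{d}s
=t^{qk}\,\frac{\Gamma(qk+q)\Gamma(1-q)}{\Gamma(qk+1)}
\]
term by term; the $\Gamma(qk+q)$ factors cancel and the sum collapses back to $\Gamma(1-q)E_{q}(-\beta t^{q})$, yielding the Mittag--Leffler envelope directly. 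One further point, present in both your sketch and the paper's argument: the control bound uses $\Arrowvert y_{u}(s)\Arrowvert$, whereas the hypothesis concerns $\Arrowvert\nabla y_{u}(s)\Arrowvert$. This passage is legitimate because $Y_{u}$ is finite dimensional and $\nabla$ is injective on it (Dirichlet eigenfunctions cannot be constant), so the two norms are equivalent there; it is worth saying so explicitly.
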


\begin{proof} 
Since system~\eqref{system3} is gradient Mittag-Leffler stabilizable on $\Omega$, 
it follows that there exists $C>0$ such that
$$
\Arrowvert \nabla y_{u}(t) \Arrowvert_{L^{2}(\Omega)}  
\leq C  E_{q}(- \beta t^{q})\Arrowvert y_{0u} \Arrowvert,
$$
for some $\beta>0$. Moreover, from the relation (see \cite{MAINARDI2})
$$
\frac{1}{1+\Gamma(1-q)t^{q}}	
\leq E_{q}(- t^{q}) \leq \frac{1}{1+\Gamma(1+q)^{-1}t^{q}},
$$
it follows that
\begin{equation}
\label{mittagrel}
\Arrowvert \nabla y_{u}(t) \Arrowvert_{L^{2}(\Omega)}  
\leq C_{\beta} t^{-q}\Arrowvert y_{0u} \Arrowvert,
\end{equation}	
and
\begin{equation}
\label{mittagrel24}
\Arrowvert y_{u}(t) \Arrowvert_{L^{2}(\Omega)}  
\leq h_{\beta} t^{-q}\Arrowvert y_{0u} \Arrowvert,
\end{equation}
for some $h_{\beta}>0$ and $C_{\beta}= C \Gamma(1+q)\beta^{-1}$.
Furthermore, by virtue of inequality \eqref{mittagrel24}, 
one obtains that \eqref{contDecom} implies
\begin{equation}
\label{conts}
\Arrowvert v(t)\Arrowvert \leq h_{\beta} t^{-q} 
\Arrowvert D_{u}\Arrowvert \Arrowvert y_{0u} \Arrowvert.
\end{equation}	
On the other hand, the unique mild solution of 
system~\eqref{system0} \cite{solmittag} can be written as 
\begin{equation}
\label{ccc}
y_{s}(t)= \sum_{n=l+1}^{+\infty} E_{q} (\lambda_{n} t^{q}) 
\langle y_{0s}, \phi_{n}\rangle\phi_{n}+ \sum_{n=l+1}^{+\infty} 
\displaystyle \int_{0}^{t}(t-s)^{q-1}E_{q,q}(\lambda_{n}(t-s)^{q})
\langle (I-P)Lv(s), \phi_{n}\rangle\phi_{n} \, \mathrm{d}s,
\end{equation} 
with $E_{q,\alpha} (\lambda_{n} t^{q})$ being the two parameter
Mittag-Leffler function \cite{mettalefflerdef}, given by
$$
E_{q,\alpha}(z)=\sum_{k=1}^{+\infty}\frac{z^{k}}{\Gamma(q k+ \alpha)},
\quad Re(q)>0,~\alpha>0, 
\quad z \in \mathbb{C}.
$$
Therefore, formula \eqref{ccc} leads to 
$$ 
\nabla y_{s}(t)= \sum_{n=l+1}^{+\infty} E_{q} (\lambda_{n} t^{q}) 
\langle y_{0s}, \phi_{n}\rangle \nabla\phi_{n}
+ \sum_{n=l+1}^{+\infty} \displaystyle \int_{0}^{t}(t-s)^{q-1}
E_{q,q}(\lambda_{n}(t-s)^{q})\langle (I-P)Lv(s), 
\phi_{n}\rangle \nabla \phi_{n} \, \mathrm{d}s. 
$$	
Now, feeding system~ \eqref{system4} by the same control 
$v(t)=D_{u} y_{u}(t)$, it follows, by using  \eqref{conts}, 
assumption \eqref{spectrumdecom}, and the monotonic property 
of the Mittag-Leffler function \cite{MAINARDI2}, that
$$
\begin{tabular}{rcll}
$\Arrowvert \nabla y_{s}(t) \Arrowvert$ 
& $\leq$ & $GE_{q}(- \beta t^{q}) \Arrowvert y_{0s} \Arrowvert 
+ G h_{\beta} \Arrowvert D_{u} \Arrowvert  
\Arrowvert I-P \Arrowvert  \Arrowvert L \Arrowvert 
\displaystyle \int_{0}^{t}(t-s)^{q-1}s^{-q}
E_{q,q}(-\beta(t-s)^{q} \, \mathrm{d}s$\\
&$\leq $& $GE_{q}(- \beta t^{q}) \Arrowvert y_{0s} 
\Arrowvert + G h_{\beta} \Arrowvert D_{u} \Arrowvert  
\Arrowvert I-P \Arrowvert  \Arrowvert L \Arrowvert 
\Arrowvert y_{0s} \Arrowvert
\displaystyle\sum \limits_{{k=1}}^{+\infty}\displaystyle\int_{0}^{t} 
\dfrac{(-\beta)^{k}(t-s)^{q k+q-1}s^{-q} \, \mathrm{d}s}{\Gamma(q k+ q)}$\\
&$\leq $& $GE_{q}(- \beta t^{q}) \Arrowvert y_{0s} \Arrowvert 
+ G h_{\beta} \Arrowvert D_{u} \Arrowvert  \Arrowvert I-P \Arrowvert  
\Arrowvert L \Arrowvert\Arrowvert y_{0s} \Arrowvert  
\displaystyle\sum \limits_{{k=1}}^{+\infty} 
\dfrac{(-\beta)^{k}t^{q k}\Gamma(1-\mu)}{\Gamma(q k-1)}$\\
&$\leq $& $  GE_{q}(- \beta t^{q}) \Arrowvert y_{0s} \Arrowvert 
+ G h_{\beta} \Gamma(1-\mu)\Arrowvert D_{u} \Arrowvert  
\Arrowvert I-P \Arrowvert  \Arrowvert L \Arrowvert  
\Arrowvert y_{0s} \Arrowvert
E_{q}(- \beta t^{q})).$
\end{tabular}
$$
One obtains	that
\begin{equation}
\label{mittagrel2}
\Arrowvert \nabla y_{s}(t) \Arrowvert
\leq  G (1+h_{\beta} \Arrowvert D_{u} \Arrowvert  
\Arrowvert I-P \Arrowvert  \Arrowvert L \Arrowvert)	
E_{q}(- \beta t^{q})\Arrowvert y_{0s} \Arrowvert.
\end{equation}
Hence, by replacing \eqref{mittagrel} and \eqref{mittagrel2} 
in the inequality
\begin{equation}
\label{infinity3}
\Arrowvert \nabla y(t) \Arrowvert 
\leq \Arrowvert \nabla y_{s}(t) \Arrowvert 
+ \Arrowvert  \nabla y_{u}(t) \Arrowvert, 
\end{equation}
one gets the Mittag-Leffler stabilization of 
system~\eqref{system0} by the control law
$(D_{u} y_{u}(t),0)$.
\end{proof}

\begin{corollary}
Under the same assumptions of Theorem~\ref{theorem3}, if
system~\eqref{system3} is strongly stabilizable using 
the control~\eqref{contDecom}, then system~\eqref{system0}
is gradient strongly stabilizable by the feedback 
control~\eqref{contDecom} on $\Omega$.	 
\end{corollary}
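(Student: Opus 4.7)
The plan is to mirror the proof of Theorem~\ref{theorem3}, replacing the explicit algebraic/Mittag--Leffler decay estimates by asymptotic $\varepsilon$--$T$ arguments. First, I would invoke the decomposition~\eqref{eqa} and split the state as $y = y_u + y_s$, feeding the same feedback $v(t) = D_u y_u(t)$ into both subsystems~\eqref{system3} and~\eqref{system4}. By hypothesis, the closed-loop equation for $y_u$ is gradient strongly stable, so $\|\nabla y_u(t)\|_{L^{2}(\Omega)} \to 0$. Since $Y_u = \operatorname{span}\{\phi_1,\ldots,\phi_l\}$ is finite-dimensional and the Dirichlet boundary condition forces $\nabla$ to be injective on $Y_u$ (any $\phi \in Y_u$ with $\nabla \phi = 0$ is constant and vanishes on $\partial\Omega$), the norms $\|\cdot\|$ and $\|\nabla \cdot\|_{L^{2}(\Omega)}$ are equivalent on $Y_u$; hence $\|y_u(t)\| \to 0$ and consequently $\|v(t)\| \leq \|D_u\|\,\|y_u(t)\| \to 0$ as $t \to +\infty$.

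Next, I would analyze the stable component using the mild solution representation~\eqref{ccc}, whose gradient is
$$
\nabla y_s(t) = \sum_{n=l+1}^{+\infty} E_q(\lambda_n t^q) \langle y_{0s},\phi_n\rangle \nabla \phi_n + \sum_{n=l+1}^{+\infty} \int_0^t (t-s)^{q-1} E_{q,q}(\lambda_n(t-s)^q) \langle (I-P)Lv(s),\phi_n\rangle \nabla \phi_n \, \mathrm{d}s.
$$
The first (free) term tends to $0$ in $L^{2}(\Omega)$ because each coefficient $E_q(\lambda_n t^q)$ is monotonically decreasing to $0$ thanks to $\lambda_n \leq -\beta < 0$, and the series can be controlled by dominated convergence using the bound $|E_q(\lambda_n t^q)| \leq 1$.

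The main obstacle is the convolution term, since without an algebraic $t^{-q}$ bound on $\|v\|$ one cannot reuse the calculation of Theorem~\ref{theorem3}. I would handle it by the standard splitting trick: fix $\varepsilon > 0$, choose $T_\varepsilon$ so that $\|v(s)\| < \varepsilon$ for all $s \geq T_\varepsilon$, and split $\int_0^t = \int_0^{T_\varepsilon} + \int_{T_\varepsilon}^t$. On the tail, the integrand is controlled by $\varepsilon\,\|I-P\|\,\|L\|\,G\,(t-s)^{q-1} E_{q,q}(-\beta(t-s)^q)$, whose integral over $[T_\varepsilon, t]$ is bounded by $\beta^{-1}(1 - E_q(-\beta t^q)) \leq \beta^{-1}$, giving a contribution of order $\varepsilon$. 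On the compact piece $[0,T_\varepsilon]$, the kernel $(t-s)^{q-1} E_{q,q}(-\beta(t-s)^q)$ tends to $0$ uniformly in $s$ as $t \to +\infty$ (by the asymptotics of $E_{q,q}$), while $\|v\|$ is bounded on $[0,T_\varepsilon]$, so this contribution vanishes. Letting $t \to +\infty$ and then $\varepsilon \to 0$ yields $\|\nabla y_s(t)\|_{L^{2}(\Omega)} \to 0$.

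Finally, combining the two parts through the triangle inequality $\|\nabla y(t)\|_{L^{2}(\Omega)} \leq \|\nabla y_u(t)\|_{L^{2}(\Omega)} + \|\nabla y_s(t)\|_{L^{2}(\Omega)}$ gives $\|\nabla y(t)\|_{L^{2}(\Omega)} \to 0$, proving the gradient strong stabilizability of~\eqref{system0} by the feedback~\eqref{contDecom}. The genuinely delicate point, as anticipated, is justifying the interchange of limit and sum/integral in the convolution term; one rigorous route is to bound the series uniformly via $\|\nabla \phi_n\|_{L^{2}(\Omega)}^2 \leq |\lambda_n|$ (consequence of $A$ being a uniformly elliptic self-adjoint operator with Dirichlet data) and dominate by an integrable envelope, so that the $\varepsilon$--$T$ estimate passes to the $L^{2}$-norm of the whole series.
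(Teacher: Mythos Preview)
The paper gives no proof of this corollary; it is stated immediately after Theorem~\ref{theorem3} and implicitly left as a routine adaptation of that proof. Your proposal supplies precisely that adaptation, and does so correctly: you keep the decomposition $y=y_u+y_s$, the mild-solution formula~\eqref{ccc}, and the triangle inequality~\eqref{infinity3}, but replace the algebraic rate $t^{-q}$ on $\|v(t)\|$ by the qualitative statement $\|v(t)\|\to 0$ and handle the convolution with a standard $\varepsilon$--$T$ split. The identity $\int_0^{t}(t-s)^{q-1}E_{q,q}(-\beta(t-s)^q)\,\mathrm{d}s=\beta^{-1}\bigl(1-E_q(-\beta t^q)\bigr)\le\beta^{-1}$ that you use on the tail is exactly the right tool, and the uniform decay of the kernel on the compact piece $[0,T_\varepsilon]$ is clear from the asymptotics of $E_{q,q}$.

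One remark: you go to some effort to pass from $\|\nabla y_u(t)\|\to 0$ to $\|y_u(t)\|\to 0$ via injectivity of $\nabla$ on the finite-dimensional space $Y_u$ and equivalence of norms. This is perfectly valid and in fact more careful than the paper, which in the proof of Theorem~\ref{theorem3} simply asserts the analogous bound~\eqref{mittagrel24} on $\|y_u(t)\|$ without comment. If the corollary's hypothesis is read as ordinary (state) strong stabilizability of~\eqref{system3}, that step is unnecessary; under your gradient reading it is needed, and your argument is the correct one.
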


The following result is a generalization of Theorem~\ref{theorem1}, 
where the spectral properties of the operator $A+LD$ 
are used to examine the gradient stabilization.

\begin{theorem}
\label{theorem2}
Let $(\gamma_{n})_{n\geq1}$ and $(\varphi_{n})_{n\geq1}$ 
be the eigenvalues and the corresponding eigenfunctions 
of the operator $A+LD$ on $Y$. If 
\begin{description}
\item[i)] $\omega_{1}(A+LD)=\emptyset$;
\item[ii)] for all $ \gamma_{n} \in \omega_{2}(A+LD)$, 
$n=1,2,\ldots$, there exists $\xi>0$ satisfying $\gamma_{n}\leq -\xi$;
\end{description}
then, system~\eqref{system0} is gradient Mittag-Leffler 
(gradient strongly, respectively) stabilizable 
by the following feedback control:
\begin{equation}
\label{contsim}
v(t)=Dy(t).
\end{equation}
\end{theorem}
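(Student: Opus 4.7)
The plan is to reduce this to Theorem~\ref{theorem1} by observing that substituting the feedback $v(t) = Dy(t)$ into \eqref{system0} yields exactly the closed-loop system \eqref{system2}, that is,
$$
{}^{C}D_{t}^{q} y(x,t) = (A+LD)\, y(x,t),
$$
with the same boundary and initial conditions. Since $L \in \mathcal{L}(V_{ad},Y)$ and $D \in \mathcal{L}(Y,V_{ad})$, the perturbation $LD$ is bounded on $Y$, and by the standard bounded perturbation theorem for $C_0$-semigroups the operator $A+LD$ generates a $C_0$-semigroup on $Y$. Hence \eqref{system2} is again a Caputo fractional evolution equation of the same type as \eqref{system0}, with dynamics operator $A+LD$ in place of $A$.

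First, I would write the unique weak solution of the closed-loop system in the eigenbasis $(\varphi_n)_{n\geq 1}$ of $A+LD$, exactly as in \eqref{key1}:
$$
y(t) = \sum_{n=1}^{+\infty} E_{q}(\gamma_n t^{q})\, \langle y_0, \varphi_n\rangle\, \varphi_n,
$$
and then apply the gradient operator term by term, just as in \eqref{eqgrad}. This yields
$$
\Arrowvert \nabla y(t) \Arrowvert_{L^{2}(\Omega)}^{2} = \sum_{n=1}^{+\infty} \bigl(E_{q}(\gamma_n t^{q})\bigr)^{2} \langle y_0,\varphi_n\rangle^{2}\, \Arrowvert \nabla \varphi_n\Arrowvert_{L^{2}(\Omega)}^{2}.
$$
Assumption (i), $\omega_{1}(A+LD)=\emptyset$, guarantees that there are no nonnegative eigenvalues whose eigenfunctions contribute a nonzero gradient, so only indices with $\gamma_n \in \omega_{2}(A+LD)$ effectively enter the sum. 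Assumption (ii) together with the monotonicity $\frac{d}{dt} E_{q}(-t^{q}) \leq 0$ (from \cite{MAINARDI2}) gives, exactly as in the proof of Theorem~\ref{theorem1}, the uniform bound $E_{q}(\gamma_n t^{q}) \leq E_{q}(-\xi t^{q})$.

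Factoring this bound out of the sum and using $\sum_{n\geq 1} \langle y_0,\varphi_n\rangle^{2}\, \Arrowvert \nabla\varphi_n\Arrowvert_{L^{2}(\Omega)}^{2} \leq C^{2}\Arrowvert y_0\Arrowvert^{2}$ delivers
$$
\Arrowvert \nabla y(t) \Arrowvert_{L^{2}(\Omega)} \leq C\, E_{q}(-\xi t^{q})\Arrowvert y_{0} \Arrowvert,
$$
which is gradient Mittag-Leffler stability of \eqref{system2} under the feedback \eqref{contsim}, and hence gradient Mittag-Leffler stabilizability of \eqref{system0}. The gradient strong stability statement follows by the same argument, using $E_{q}(\gamma_n t^{q}) \to 0$ as $t \to +\infty$ for each $\gamma_n < 0$ together with dominated convergence in the series. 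The main obstacle I anticipate is not in the Mittag-Leffler estimate itself, which is a direct transcription of the proof of Theorem~\ref{theorem1}, but rather the implicit requirement that $A+LD$ admit a complete orthonormal eigenbasis on $Y$ so that the spectral representation of the solution is valid; this structural assumption on $A+LD$ is what makes the theorem applicable, and should probably be stated or inherited from the self-adjoint, compact-resolvent framework already introduced for the decomposition approach in Section~\ref{sec:3}.
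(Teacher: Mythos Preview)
Your proposal is correct and follows exactly the paper's approach: the paper's proof consists of the single sentence ``It is an immediate consequence of the proof of Theorem~\ref{theorem1},'' and your argument spells out precisely that reduction. Your closing remark about the implicit need for a complete orthonormal eigenbasis of $A+LD$ is a fair caveat, but the paper does not address it either.
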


\begin{proof} 
It is an immediate consequence of the proof of Theorem~\ref{theorem1}.
\end{proof}

The above results lead to the following algorithm for the stabilization 
of the state gradient of system~\eqref{system0} 
by the feedback control~\eqref{contsim}. 

% --------------------------------------------------------

\paragraph{Algorithm}
\begin{description}
\item[$\odot$] Initialization: 
\begin{description}
\item[$\triangleright$] Threshold accuracy $\epsilon > 0$.
\item[$\triangleright$] Initial condition $y_{0}$.
\item[$\triangleright$] Fractional order $q$.
\end{description}	
\item[$\odot$]  Repeat:
\begin{description}
\item[$\triangleright$] Apply the feedback control $u(\cdot)$, 
given by \eqref{contsim}, to system~\eqref{system0};
\item[$\triangleright$] Solve system~\eqref{system0} using formula 
$$
y(\cdot)= \sum_{n=1}^{+\infty} 
E_{q} (\gamma_{n} t^{q}) 
\langle y_{0}, \varphi_{n}\rangle\varphi_{n}.
$$
\item[$\triangleright$] Compute the state gradient $\nabla y(\cdot)$ 
of system~\eqref{system0};
\end{description}
until $\Arrowvert \nabla y(\cdot) \Arrowvert < \epsilon$.	
\end{description}

% --------------------------------------------------------

\section{Application and numerical simulations}
\label{sec:4}

In this section, we shall present some
numerical illustrations of the developed algorithm.

Let $\Omega=]0, 1[$ and consider the fractional 
diffusion controlled system of order $q\in (0,1)$ given by 
\begin{equation}
\label{systemSim}
\left\{
\begin{array}{lll}
^{C}D_{t}^{q}y(x, t)=\dfrac{\partial^2 y}{\partial x^2}(x, t)
+\pi^2 y(x, t)+Lv(t),& x \in \Omega, t \in ]0,+\infty[,\\
y(0, t)=y(1, t)=0,  & t \in]0,+\infty[,\\
y(x,0)=y_0, &x \in \Omega,
\end{array}
\right.
\end{equation}
where operator $A=\dfrac{\partial^2}{\partial x^2}+\pi^{2}$ with the domain
$$
D(A)=\left\{ y\in H^{1}(\Omega), y(0, t)=y(1, t)=0, (\forall~t>0)\right\}.
$$
The eigenvalues of $A$ are given by $\lambda_{n}=-n^{2}\pi^{2}+\pi^2$, $n\geq1$, 
associated to the eigenfunctions $\phi_{n}(x)= \sqrt{2}\sin((n\pi x))$, $n\geq1$.

We now take the control operator $L=\pi I$. Then the feedback control~\eqref{contsim} 
with  $D=-\pi I_{Y}$ ($I_{Y}$ is the identity operator of $Y=H^{1}(\Omega)$), 
which yields
\begin{equation}
\label{Contexpj}
v(t)=-\pi^2 y(t).
\end{equation}
Furthermore, the state and the gradient state  
of system~\eqref{systemSim} are given respectively by 
$$
y(x,t)=\sum_{n=1}^{+\infty} E_{q} (\gamma_{n} t^{q})
\langle y_{0}, \varphi_{n}\rangle\varphi_{n}(x),
$$
and it follows that
\begin{equation}
\label{grad}
\nabla y(x,t)=\sum_{n=1}^{+\infty} 
E_{q} (\gamma_{n} t^{q})\langle y_{0}, 
\varphi_{n}\rangle \nabla\varphi_{n}(x),
\end{equation}
with 
$$
\gamma_{n}\in\omega(A+LD)=\left\{-n^{2}\pi^{2}, ~n\geq1 \right\},
$$
which implies that 
$$
\omega_{1}(A+LD)=\emptyset
$$ 
and there exists $\xi=\pi^{2}>0$ such that, 
for all $ \gamma_{n} \in \omega_{2}(A+LD)$, we have
$$
\gamma_{n}\leq-\pi^{2},~n\geq1.
$$
Therefore, from Theorem~\ref{theorem2}, the state gradient~\eqref{grad} 
of system~\eqref{systemSim} is Mittag-Leffler stabilizable on $\Omega=]0, 1[$.

We first give the simulations for the case $q=0.9$ 
and take the initial state $y_0=x^{2}(x-1)$.

% --------------------------------------------------------
\begin{figure}[H]
\begin{center}
\includegraphics[scale=0.8]{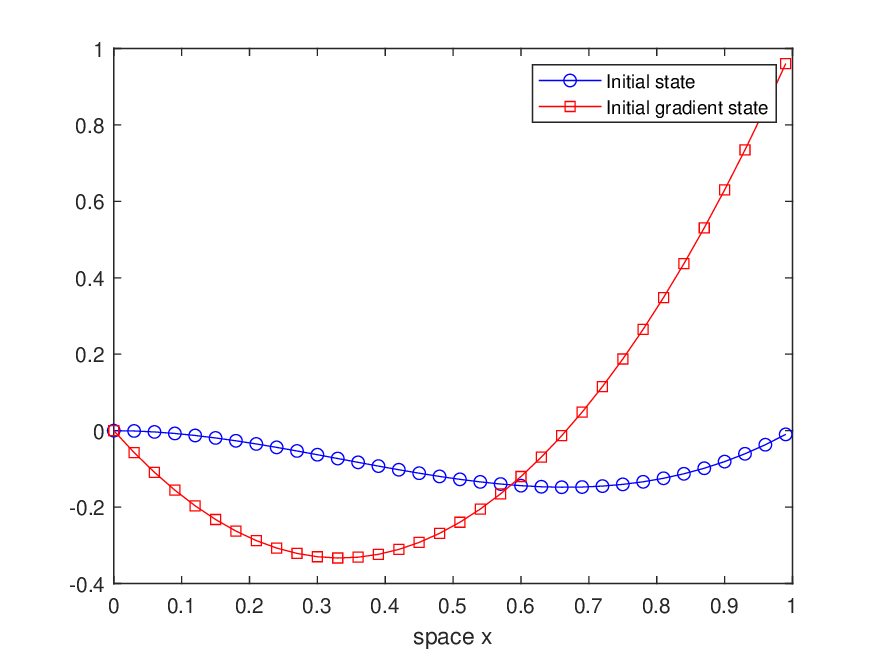}
\caption{The initial state and initial gradient state of 
system~\eqref{systemSim} behavior, for $q=0.9$.}\label{Figure1}
\end{center}
\end{figure}
% --------------------------------------------------------
\begin{figure}[H]
\begin{center}
\includegraphics[scale=0.8]{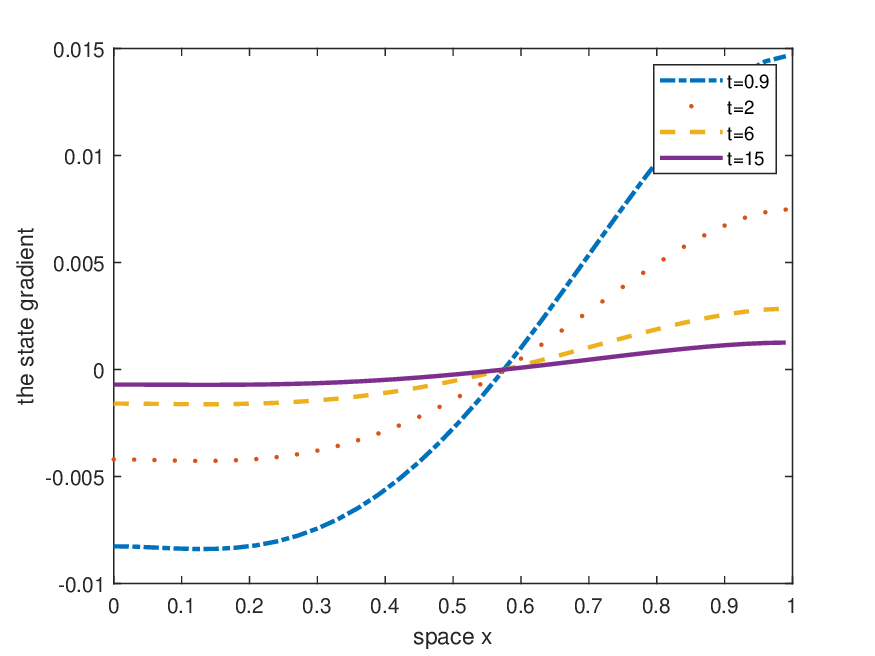}
\caption{The gradient state of system~\eqref{systemSim} evolution, 
with $q=0.9$, for instants $t=0.9$, $t=2$, $t=6$, 
and $t=15$.}\label{Figure2}
\end{center}
\end{figure}
% --------------------------------------------------------

From Figure~\ref{Figure1}, we note that the state gradient 
is unstable at the initial instant $t=0$. However, Figure~\ref{Figure2} 
shows that the state gradient of system~\eqref{systemSim} evolves 
close to zero when time $t$ increases. Moreover, it is stabilized by 
the feedback law~\eqref{Contexpj}, from the instant $t=15$, 
with the gradient stabilization error equal to $4.1 \times 10^{-3}$.
% --------------------------------------------------------
\begin{figure}[H]
\begin{center}
\includegraphics[scale=0.8]{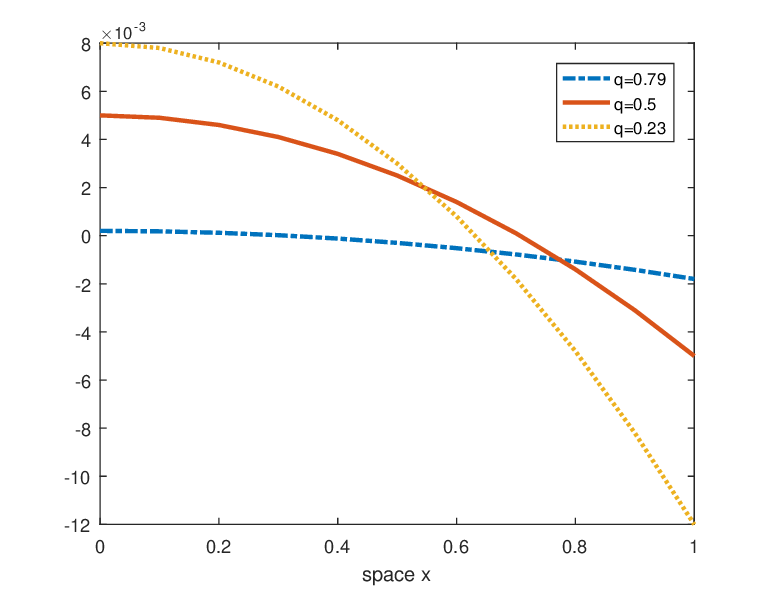}
\caption{The gradient state of system~\eqref{systemSim} evolution, 
at the instant $t=20$, for $q=0.79 $, $q=0.5 $ and $q=0.23$.}\label{Figure3}
\end{center}
\end{figure}
% --------------------------------------------------------
\begin{table}
\center
\begin{tabular}{l c c}
\hline
Fractional order $q$ & Gradient stabilization error\\
\hline	
$q= 0.1$ & $1.9142$\\
		
$q=0.23$ & $8.718\times 10^{-1}$\\
		
$q=0.5$ & $1.428\times 10^{-1}$\\
		
$q=0.65$ & $4.49 \times 10^{-2}$\\
		
$q=0.79$  &  $1.25 \times 10^{-2}$   \\
		
$q=0.9$ & $ 3.2 \times 10^{-3}$   \\
\hline
\end{tabular}
\caption{The gradient stabilization error of system~\eqref{systemSim} 
for some value of the fractional order $q$ at $t=20$.}\label{Table1}
\end{table}
% --------------------------------------------------------
Table~\ref{Table1} presents the variation effect of the fractional order $q$  
of system~\eqref{systemSim} on the state gradient stabilization error 
at a fixed instant $t=20$. We see that the gradient stabilization error 
decreases when the value of the fractional order $q$ increases. 
This is also illustrated by Figure~\ref{Figure3} 
in the case of the fractional orders $q=0.79 $, $q=0.5 $ and $q=0.23$.

% ------------------------------------------------------------------

\section{Conclusion}
\label{sec:5}

In this paper, the state gradient stability and stabilization 
of Caputo fractional diffusion systems are introduced. 
Sufficient conditions to obtain the gradient Mittag-Leffler 
and strong stability are investigated. Also, according 
to the conditions verified by the state space
and those verified by the dynamic of the fractional linear system, 
the decomposition method is applied to characterize the state 
gradient stabilizing feedback law. Furthermore, in order to validate 
our developed results, a numerical example with simulations is provided.

Several questions are still open and deserve further investigations. 
This is the case, for example, of studying the gradient stabilization 
of fractional diffusion systems involving other kinds 
of fractional derivatives with singular and nonsingular kernels, including 
the Riemann--Liouville fractional derivative, the Caputo--Fabrizio fractional 
derivative, the weighted Atangana--Baleanu fractional derivative, the power 
and tempered fractional derivatives, etc. It would be
also interesting to extend our Mittag-Leffler and strong gradient stabilization 
results to nonlinear fractional distributed systems. This is under investigation
and will be addressed elsewhere.

% ---------------------------------------------------

\section*{Declarations}

\subsection*{Conflict of interest}

The authors declare no conflict of interest.

% ---------------------------------------------------

\subsection*{Data availability}

The manuscript has no associated data.

% ---------------------------------------------------

\subsection*{Acknowledgements}

Zitane and Torres are supported by The Center for Research and
Development in Mathematics and Applications (CIDMA)
through the Portuguese Foundation for Science and Technology 
(FCT -- Funda\c{c}\~{a}o para a Ci\^{e}ncia e a Tecnologia),
projects UIDB/04106/2020 (\url{https://doi.org/10.54499/UIDB/04106/2020})
and UIDP/04106/2020 (\url{https://doi.org/10.54499/UIDP/04106/2020}).

% ---------------------------------------------------

% ---------------------------------------------------

\end{document}